\newcommand{\la}{\lambda}
\newcommand{\var}{\varphi}
\newcommand{\Del}{\Delta}
\newcommand{\til}{\tilde}
\newcommand{\ga}{\gamma}
\newcommand{\lo}{\longrightarrow}
\newcommand{\Lo}{\Longrightarrow}
\newcommand{\LO}{\Longleftrightarrow}
\newcommand{\sig}{\sigma}
\newcommand{\BC}{\mathbb{C}}
\newtheorem{theorem}{Theorem}[section]
\newtheorem{lemma}[theorem]{Lemma}
\theoremstyle{definition}
\numberwithin{equation}{section}
\title{\bf   Criteria for the Application of Double Exponential Transformation }
\author[1]{Arezoo  Khatibi}
\author[2]{Omid Khatibi\footnote{correspond email: \protect\url{md_khn@yahoo.com,a1449933@unet.univie.ac.at},Faculty of Mathematics,  University of Vienna,  Oskar morgernstern platz 1, 1090, Vienna, Austria.}} 
\affil[1]{University of Kashan,Kashan,Iran}
\affil[2]{ University of Vienna,Vienna,Austria}
\begin{document}
\maketitle
\begin{abstract}
The double exponential formula was introduced for calculating definite integrals   with singular point oscillation functions and Fourier-integrals. The double exponential transformation is not only useful for numerical computations but it is also used in different methods of Sinc theory. In this paper we use double exponential transformation for calculating particular improper integrals. By improving integral estimates having singular final points. By comparison between double exponential transformations and single exponential transformations it is proved that the error margin of double exponential transformations is smaller. Finally Fourier-integral and double exponential transformations are discussed. \\
 MATHEMATICS SUBJECT CLASSIFICATION: 65D30, 65D32.\\
Key words:
 numerical integral, double exponential transformation, single  
exponential, Fourier integral

\footnote

\end{abstract}

\section*{Introduction:}
The double exponential transformations (DE) is used for evaluation of integrals of an analytic function has end point singularity. This transformation improved frequently transformation like Fourier transformation. The fact that double exponential transformations error is smaller than single exponential transformation before this was seen as intuitive, but here we have proved it through theoretical analysis. Our numerical results are achieved in the maple software with different numerical results compared with Mori [1]. This article defines the criteria for using double exponential transformations. This technique can not be applied to sharp points. Sinc calculations are very accurate and complex. Regarding the above we have reached a numerical conclusion. In the following sections, we have expanded on the application of double exponential transformations worked in the Fourier integrals and also boundary value conditions. In this regard by incorporating new innovations in the lemma and theorems, we have improved on the above results. In [4] Ooura expended animt-type quadrature formula with the same asymptotic performance as the de-formula. Muhammad, Nurmuhammad and Mori the process by which a numerical  solution of integral equations by means of the Sinc collocation method based on the double exponential transformation has attracted considerable attention recently [5]. In a mathematically more rigorous manner, optimality of the double exponential formula-functional analysis approach is established by Sugihara [6]. Koshihara and Sugihara made a full study of A Numerical Solution for the Sturm–Liouville Type eigenvalue Problems employing the double exponential Transformation [8]. Ooura expended a double exponential formula for the Fourier transformation [10]. Stenger used DE Formula in Sinc approximations [7]. This article is organized as follows:\\
 In section 2 we state and prove a comparison between errors in DE and SE transformation. In section 3 we use DE transformation to compute several integrals and compare the result with that of Mori [1]. In section 4 we demonstrate an application of DE in solving a boundary value problem in section 5 we apply the DE transformation to Fourier integral operators and give a novel proof for the auxiliary theorem 2. 
\section{Conception of Double Exponential Transformation}
Let the following integral be 
\begin{equation}
\int_a^b\!f(x)\,\mathrm{d}x,
\end{equation}
such that the interval $(a,b)$ is infinite or half infinite and the function under integral is analytic  on $(a,b)$ and perhaps has singular  point  in $x=a$ or $x=b$ or both, now consider the  change  of  variables below [9]:
\begin{equation}\label{eq:varchange}
x=\phi(t),a=\phi(-\infty),b=\phi(+\infty),
\end{equation} 
 where $\phi$ is analytic on $(-\infty,+\infty)$ and 
 \begin{equation}\label{eq:defquadratur}
I=\int_a^b\!f(\phi(t))\phi\prime(t)\,\mathrm{d}t.
\end{equation}  
Hence, after the change of variable the integrand decays double exponential:
\begin{equation}\label{eq:doubleexponentialdecay}
\mid f(\phi(t))\phi\prime(t)\mid\approx e^{-c e^ {\mid t \mid}}, \mid t \mid\rightarrow\infty, c\textgreater 0. 
\end{equation}
By using   the trapezoidal formula with mesh size h on \eqref{eq:defquadratur} we have 
\begin{equation}\label{eq:trapez}
I_{h}=h\sum_{k=-\infty}^{+\infty}f(\phi(kh))\phi\prime(kh). 
\end{equation}
The above infinite summation is truncated from $k=〖-N〗^-$ to $k =N^+$ in computing \eqref{eq:trapez} we get
\begin{equation}
I_{h}^{(N)}=h\sum_{k=-N^-}^{N^+}f(\phi(kh))\phi\prime(kh),N=N^+ +N^- +1 
\end{equation}

 The N+ 1 is due to zero point, for example, $N=30$ we have $N^-=14, N^+ =15 $ finally 30=15+14+1.
What gives us the authority to truncate n numbers   from the   infinite summation? 
 In fact $e^{(-ce^{\mid t \mid} )}$ quickly   approaches zero.
  
For the integral over (-1, 1).
\begin{equation}
\int_ {-1}^ 1\!f(x)\,\mathrm{d}x,
\end{equation}
consider,
\begin{equation}
\phi(t) = \tanh (\frac{\pi}{2 }\sinh(t)),
\end{equation}
by substituting the transformation $ x= \phi(t) = \tanh (\pi/2 \sinh(t)) $  into \eqref{eq:trapez} we obtain
\begin{equation}
\phi(t)\prime= \tanh \prime (\pi/2\sinh(t))\pi/2\cosh(t)= \left(\frac {1}{\cosh^2(\pi/2\sinh(t))}\right)\pi/2\cosh(t).
\end{equation}
Consequently, by using the double exponential formula we will have:
\begin{equation}
I_{h}^{(N)}=h\sum_{k=-N^-}^{N^+}f(\tanh(\pi/2\sinh(kh))\frac{ \pi/2 \cosh(kh)}{\cosh^2 (\frac{\pi \sinh(kh)}{2})}. 
\end{equation}
\section{Numerical Experience and Comparison between D.E Approximation Error and S.E Transformation Approximation Error in Sinc Theory}

In this section we prove the main results of our paper. We compare two error estimates for the second-orders two point boundary problem exponential due to Horiuchi and Sugihara [2], which combine the double exponential transformation with the Sinc-Galerkin method, and prove the superiority of one estimate over the other.

As we laid out in [1], one considers
\begin{equation}
\til{y}^{''}(x)+\til{\mu}(x)\til{y}'(x)+\til{\nu}(x)\til{y}
(x)=\til{\sig}(x)\quad a<x<b,
\end{equation}
$\til{y}(a)=\til{y}(b)=0$,\\
by using the variable transformation\\
\begin{equation}
x=\var(t)\qquad a=\var(-\infty)\qquad b=\var(+\infty),
\end{equation}
since, together with the change of notation we have\\
\begin{equation}
y(t)=\til{y}(\var(t)),\label{eq:ytilda} 
\end{equation} 
transforms to 
 \begin{equation}
 {y}^{''}(x)+{\mu}(x){y}'(x)+{\nu}(x){y}
(x)={\sig}(x)\quad -\infty<t<+\infty,\label{eq:moadelemoshtaghdovominf}
  y(-\infty)=y(+\infty)=0,
 \end{equation}
 the Sinc-Galerkin method have ability to approximate the solution of \eqref{eq:moadelemoshtaghdovominf} by combination of Sinc function:
\begin{equation}
y_N (t)=\sum_{k=-n}^nw_k
S(k,h)(t), N=2n+1.
\end{equation}
 Where
 \begin{equation}
 S(k,h)(t)=\frac{\sin(\pi)/h(x-kh)}{\pi/h(x-kh)}, k=0, \pm1, \pm2,\dotsc 
 \end{equation}
 based on numerical result and theoretical proof the approximation error estimate by 
 \begin{equation}
 |y(t)-y_N (t) |\leq c'N^{\frac{5}{2}} exp(-c\sqrt{N}),
 \end{equation}
 when the true solution y(t) of the transform problem decays single exponentially leads to
 \begin{equation}
 |y(t) |\leq \alpha exp(-\beta|t|).
 \end{equation}
 Furthermore, the approximation error estimate by
 \begin{equation}
 |y(t)-y_N (t) |\leq c' N^2  \exp(\frac{-cN}{\log N}), 
 \end{equation}
 when the true solution y(t) of the transform problem decays double exponentially leads to
 \begin{equation}
 |y(t) |\leq \alpha \exp(-\beta  \exp(\ga|t|)).
 \end{equation}  
\begin{theorem}
In The above situation we prove that the approximation error arising from double exponential decay is less than  the approximation error arising from single exponential decay.
\end{theorem}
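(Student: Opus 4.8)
The plan is to treat the two displayed bounds as explicit functions of the truncation parameter $N$ and to show that the double exponential bound is eventually dominated by the single exponential one. Set
\[
E_{\mathrm{DE}}(N)=c'N^{2}\exp\!\left(\frac{-cN}{\log N}\right),\qquad E_{\mathrm{SE}}(N)=c'N^{5/2}\exp\!\left(-c\sqrt{N}\right),
\]
so that the assertion amounts to $E_{\mathrm{DE}}(N)<E_{\mathrm{SE}}(N)$ for all sufficiently large $N$. The cleanest route is to form the quotient $E_{\mathrm{DE}}(N)/E_{\mathrm{SE}}(N)$ and prove that it converges to $0$.

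First I would cancel the common factor $c'$ and rewrite the quotient as
\[
\frac{E_{\mathrm{DE}}(N)}{E_{\mathrm{SE}}(N)}=N^{-1/2}\exp\!\left(c\sqrt{N}-\frac{cN}{\log N}\right)=N^{-1/2}\exp\!\left(c\sqrt{N}\Bigl(1-\frac{\sqrt{N}}{\log N}\Bigr)\right).
\]
The decisive elementary fact is that $\sqrt{N}/\log N\to+\infty$ as $N\to\infty$; this follows, for example, from the substitution $N=e^{2u}$, under which $\sqrt{N}/\log N=e^{u}/(2u)\to\infty$. Hence the bracketed term $1-\sqrt{N}/\log N$ is negative and unbounded below, so the exponent $c\sqrt{N}\bigl(1-\sqrt{N}/\log N\bigr)$ tends to $-\infty$.

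Second, because the exponential factor then decays to $0$ while the polynomial prefactor $N^{-1/2}$ stays bounded, the whole quotient tends to $0$. This yields an index $N_{0}$ with $E_{\mathrm{DE}}(N)<E_{\mathrm{SE}}(N)$ for every $N\ge N_{0}$, which is precisely the claimed superiority of the double exponential decay. If an explicit threshold is wanted, I would take logarithms and reduce the inequality to $\tfrac12\log N+c\sqrt{N}\bigl(\sqrt{N}/\log N-1\bigr)\ge 0$, then locate the least $N$ for which the left side is nonnegative.

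The step needing the most care is the asymptotic comparison of the two exponential rates $N/\log N$ and $\sqrt{N}$, together with the bookkeeping of the polynomial prefactors $N^{2}$ and $N^{5/2}$; none of this is deep, since the exponential term dominates any power of $N$ and the conclusion is insensitive to the precise positive values of $c$ and $c'$ (indeed it survives unchanged if the two bounds carry different positive constants). The one point I would state explicitly is that the inequality is asymptotic, valid for $N\ge N_{0}$ rather than for every $N$, because for small $N$ the prefactors and the $\log N$ in the denominator could in principle reverse it; accordingly I would phrase the theorem's conclusion as the eventual dominance of the double exponential error bound.
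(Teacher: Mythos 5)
Your proposal is correct and takes essentially the same approach as the paper: both compare the two bounds as functions of $N$ and reduce the claim to the fact that $cN/\log N$ eventually exceeds $c\sqrt{N}$, i.e.\ that $\sqrt{N}>\log N$ for large $N$, established by the same logarithmic substitution. The only difference is presentational: the paper splits off the polynomial prefactor as a separate condition and proves the key elementary fact via an auxiliary lemma ($e^{t}>at$ for $t>2a$, shown by a Taylor/quadratic argument), which yields the explicit threshold $N_0=\max\{(c'/c'')^2,e^{x_0}\}$, whereas you phrase the conclusion as the quotient tending to $0$ and only sketch how a threshold could be extracted.
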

\begin{proof}
 The single exponential approximation error can be estimated by:
 \begin{equation}\label{eq:singleexponentialapproximateerror}
|y(t)-y_N (t) |\leq c'N^{\frac{5}{2}} exp(-c\sqrt{N}).
\end{equation}
\begin{equation}
y_N (t)=\sum_{k=-n}^nw_k
S(k,h)(t), N=2n+1.
\end{equation}
\\
$S(k,h)(t)$ is Sinc function.

The  downside   is  established  in  accordance  with  the  following  decay single exponential  $y(t)$,
\begin{equation}\label{eq:singleexponential}
|y(t) |\leq \alpha exp(-\beta|t|).
\end{equation}
The  double  exponential  approximation  error  can  be  estimate  by
\begin{equation}\label{eq:doubleexponentialapproximationerror}
|y(t)-y_N (t) |\leq c' N^2  \exp(\frac{-cN}{\log N}),
\end{equation}
\\
if  the  true  solution  of $y(t)$  the  transformed   problem  decay double 
exponentially  like[1]\\
\begin{equation}
|y(t) |\leq \alpha \exp(-\beta  \exp(\ga|t|)),
\end{equation}
we will  prove that  the  error in the  D.E  transformation   is  much less 
than the 
error in   the  S.E  transformation.

For  this  purpose we  need  lemma \ref{lemma1}.
\begin{lemma}\label{lemma1} There exists $N_0$ such that for all $N>N_0$, the inequality  
$c'N^2 e^{\frac{-cN}{\log N}}<c''N^{\frac{5}{2}} e^{-c\sqrt{N}}$ hold true.
\end{lemma}
\begin{proof}
\begin{equation}
\frac{c'}{c''}<N^{\frac{1}{2}}
e^{\frac{cN}{\log N} {-c\sqrt{N}}}.
\end{equation}

It is necessary  to be  prove: 
\begin{equation}
1<e^{\frac{cN}{\log N}-c\sqrt{N}}, 
\frac{c'}{c''}<N^{\frac{1}{2}},
\end{equation}
\begin{equation}
1<e^{\frac{cN}{\log N}-c\sqrt{N}}\LO 0<\frac{cN}{\log N}-c\sqrt{N},
\end{equation}
\begin{equation}
\frac{c'}{c''}<N^{\frac{1}{2}}\LO (\frac{c'}{c''})^2<N,
\end{equation}
\begin{equation}
0<\frac{cN}{\log N}-c\sqrt{N}\Lo cN>c\sqrt{N}\log N\LO \sqrt{N}>\log N
\overset{N=\log N}{\LO} e^{\frac{\log N}{2}}>\log N,
\end{equation}
according to  lemma \ref{lemma2} there exists $x_0$ such that for, 
\begin{equation}
 x>x_0, e^{\frac{x}{2}}>x,
\end{equation}
is now
\begin{equation} 
N>e^{x_0}\Lo \log N> x_0\overset{N=\log N}{\LO} e^{\frac{\log N}{2}}>\log 
N, 
\end{equation}
then put $N_0=max\{(\frac{c'}{c''})^2,e^{x_0}\}$
 and  the theorem  satisfied.
 \end{proof}
 For complete the proof we need other lemma. 

\begin{lemma}\label{lemma2}
 For each $a>0$  such 
that $t>2a$  we have $e^t>at$.
\end{lemma}
\begin{proof}
\begin{equation}
e^t=1+\frac{t}{1!}+\frac{t^2}{2!}+\frac{t^3}{3!}+\dots >1+t+\frac{t^2}{2},
\quad  t>0,
\end{equation}
for this we prove that,
\begin{equation}
  1+t+\frac{t^2}{2}>at,
  \end{equation}
this yields,
\begin{equation}    
 1+(1-a)t+\frac{t^2}{2}>0,
 \end{equation} 
 \begin{equation}
 t>\frac{a-1+\sqrt{(a-1)^2-2}}{1},
 \end{equation}
and
\begin{equation}
t>\frac{a-1-\sqrt{(a-1)^2-2}}{1},
\end{equation}
 the discriminant  of  the quadratic equation
 \begin{equation}   
1+(1-a)t+\frac{t^2}{2}=0,
\end{equation}
is
\begin{equation}  
\Del=(1-a)^2-2=a^2-2a-1=0,
\end{equation}                         
if  $\Del\geq 0$ we  put  $t_0$  equal  max root in quadratic equation  
$t_0=a-1+\sqrt{(a-1)^2-2}$.
Then  the statement  is satisfied. And  note  that
 \begin{equation}  
t_0<a+\sqrt{a^2}=2a. 
\end{equation}
If  $\Del<0$ that is  the roots are  complex  and the coefficient of  $t^2$ 
is positive then  for all of  t  inequality is true.
 Hence  we  can  set $t_0=0$  
and this completes the proof.

\end{proof}
\end{proof}

\section{NUMERICAL EXPERIENCE}
For  the  following  four  integral we get:
\begin{center}
\begin{tabular}{ccc}
\qquad\qquad\qquad\qquad DEFINT(Table 1)\\\hline
INTEGRAL & N & abs. error \\\hline
$I_1$ &25   &0.370582258 \\\hline
$I_2$ &  387 & 0.154551243\\\hline
$I_3$ & 387 & 0.021640875\\\hline
$I_4$ & 259 & 0.001118879\\\hline
\end{tabular}
\end{center}
In the  above table, the absolute 
  error  tolerance  is $10^{-8}$ where $N$  is the 
number of function  evaluations and abs. Error  is the actual absolute  error 
of  the result and  it gives an approximate value  which is correct up to 16 
significant digits.
\begin{align*}
&I_1=\int_0^1X^{\frac{-1}{4}} log \frac{1}{X}dx.\quad 
I_2=\int_0^1\frac{1}{16(X-\frac{\pi}{4})^2+\frac{1}{16}}dx. \qquad I_3=\int_0^\pi 
cos (64 sin X)dx.\\
& I_4=\int_0^1 exp(20(x-1))sin (256 x) dx.
\end{align*}

Our results are very different from [1] which was 
 Previously  defined. Compare 
quadrature or Horner method,  give at least $10^{-5}$ errors so  these method are
 better than the double exponential transformation method. 
\section{•Application
 of the Double-Exponential Transformation for the Sinc-Galerkin 
Method and the Second-Order Two-Point Boundary Problem}
Let
\begin{equation}
\til{y}^{''}(x)+\til{\mu}(x)\til{y}'(x)+\til{\nu}(x)\til{y}
(x)=\til{\sig}(x)\quad a<x<b,
\end{equation}
$\til{y}(a)=\til{y}(b)=0$,\\
by using the variable transformation\\
\begin{equation}
x=\var(t)\qquad a=\var(-\infty)\qquad b=\var(+\infty),
\end{equation}
since, together with the change of notation we have\\
\begin{equation}
y(t)=\til{y}(\var(t)),\label{eq:ytilda} 
\end{equation}
and
\begin{equation}
y'(t)=\var'(t)\til{y}'(\var(t)),\label{eq:derivative} 
\end{equation}
hence
\begin{equation}
y''(t)=\var''(t)\til{y}'(\var(t))+\var'(t)^2\til{y}''(\var(t)),\label{eq:seconderivative}
\end{equation}
in virtual point that \ref{eq:derivative} we obtain:
\begin{equation}
\til{y}'(\var(t))=\frac{1}{\var'(t)}y'(t). 
\end{equation}
In note that \ref{eq:seconderivative} we have:
\begin{equation}
\til{y}''(x)=-\til{\mu}(x)\til{y}'(t)-\til{\nu}(x)\til{y}(x)+\til{\sig}(x),
\end{equation}
\begin{equation}
y''(t)=\var''(t)\til{y}'(\var(t))+\var'(t)^2(-\til{\mu}(
\var(t))\til{y}'(\var(t))-\til{\nu}(\var(t))\til{y}(\var(t))+
\til{\sig}(\var(t))).\label{eq:seconderivative2}
\end{equation}
Our aim is change $\til{y}$ to y by using \ref{eq:ytilda},  \ref{eq:seconderivative2} we get follow 
equation:
\begin{equation}
y''(t)=\var''(t)\frac{1}{\var'(t)}y'(t)+\var'(t)^2(-
\til{\mu}(\var(t))\frac{1}{\var'(t)}y'(t)-\til{\nu}(\var(t))y(t)+
\til{\sig}(\var(t))),
\end{equation}
this however, leads to
\begin{equation}
y''(t)+(\var'(t)\til{\mu}(\var(t))-\frac{\var''(t)}{\var'(t
)})y'(t)+\var'(t)^2\til{\nu}(\var(t))y(t)=\var'(t)^2\til{\sig}(\var(t)), 
\end{equation}
\begin{equation}
\mu(t)=(\var'(t)\til{\mu}(\var(t))-\frac{\var''(t)}{\var'(t)}, 
\end{equation}
\begin{equation}
\nu(t)=\var'(t)^2\til{\nu}(\var(t)), 
\end{equation}
\begin{equation}
\sig(t)=\var'(t)^2\til{\sig}(\var(t)), 
\end{equation}
and observ that
\begin{equation}
y''(t)+\mu(t)y'(t)+\nu(t)y(t)=\sig(t).\label{eq:seconderivative3} 
\end{equation}
 By using the boundary conditions and \ref{eq:derivative} we have:
\begin{equation} 
\begin{cases}\til{y}(a)=\til{y}(\var(-\infty))=y(-\infty)=0\\
\til{y}(b)=\til{y}(\var(\infty))=y(\infty)=0
\end{cases}, 
\end{equation}
The Sink Galerkin  approximation  method provides 
 solutions to transform such as  the one in \ref{eq:seconderivative3} by combining it linearly with the Sinc function we have:
 \begin{equation}
y_N(t)=\sum_{k=-n}^n W_kS(k,h)(t),\quad N=2n+1. 
\end{equation}
Every method for trapezoidal numerical integration suggest one method in Sinc 
approximation. Horiuchi and Sugihara combine the Sinc-Galerkin method with the 
double exponential transformation for the second-order two-point boundary 
problem [2]. Mori and Nurmuhammad and Muhammad studied the performance of 
de-sinc method when used in the second order singularly perturbed boundary 
value problems [3].
\section{ Evaluation of Fourier-type integrals}
The double exponential is useful for several integrals for example Fourier 
integral
but it does not suitable for Fourier-type integrals slowly oscillatory function.
Fourier integral can compute from follow formula:
\begin{equation}
\hat{f}(w)=\int_{-\infty}^{+\infty } f(x)e^{ixw}dx.
\end{equation}
\begin{equation}
e^{ixw}=cos wx-i sinwx.\label{eq:4.2}
\end{equation}
Therefore we can show sin section with $I_s$ sign and cos section with 
$I_c$ sign then:
\begin{equation}
\begin{cases} Is=\int_0^\infty f_1(x) sin wx\\
 Ic=\int_0^\infty f_1(x) cos wx
\end{cases}\label{eq:isandic}. 
\end{equation}
we choose $\var(t)$ function applies to the following conditions.
\begin{equation}
\var(-\infty)=0 , \var(-\infty)=\infty,  
\end{equation}
if $t\lo -\infty$ 
 double exponentially
 \begin{equation}
\var'(t)\lo 0, \label{eq:phiderivativelimit}
\end{equation}      
if $t\lo \infty$ 
  double exponentially
\begin{equation}   
 \var(t)\lo t,\label{eq:limitapproacht}
 \end{equation}           
if $t\lo 0$ then  
\begin{equation}
\exists D|\var'(t)|\leq Dexp 
(-cexp|t|)|t|>0\LO \var'(t)\approx exp (-cexp|t|)\lo 
0 
\end{equation}
by using \ref{eq:limitapproacht}  we have:
\begin{equation}
|\frac{\var(t)}{t}|\approx exp (-cexp|t|),
\end{equation}
this means having the same exponentially growth ,on the other hand,
\begin{equation}
\var(t)-t\approx exp (-cexp|t|).
\end{equation}
Now by using change follow variable in transformation $I_S$, $I_C$ we have:\\
\begin{equation}
\begin{cases}
I_S: X=M\var(t)/w\\
I_C: X=M\var(t-\frac{\pi}{2M})/w\end{cases} (M=constant), 
\end{equation}
\begin{equation}
X=M\var(t)/w\Rightarrow dx=M\var'(t)/wdt .
\end{equation}
By using \ref{eq:isandic} we obtain:
\begin{equation}
I_S=\int_0^\infty f_1(x) sin wx=\int_0^\infty f_1(M\frac{\var(t)}{w})sin 
(M\var(t))\frac{M\var'(t)}{w}dt.
\end{equation}
\begin{theorem}Consider
\begin{equation}
\var(t)=\frac{t}{1-exp(-k sin ht)},
\end{equation}
if $t\lo -\infty$ 
 double exponentially $\var'(t)\lo 0$.
 \end{theorem}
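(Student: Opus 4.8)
The plan is to reduce the behaviour at $-\infty$ to a problem at $+\infty$ and then produce an explicit bound of the form $\exp(-c\,e^{|t|})$ demanded by \eqref{eq:doubleexponentialdecay}. First I would substitute $s=-t$ and exploit the parities $\sinh(-s)=-\sinh s$ and $\cosh(-s)=\cosh s$. Assuming $k>0$, this turns the quantity $E:=\exp(-k\sinh t)=\exp(k\sinh s)$ into something that tends to $+\infty$ as $s\lo+\infty$. I would also observe that the denominator $1-\exp(-k\sinh t)$ vanishes only where $\sinh t=0$, i.e.\ at $t=0$, so it is bounded away from zero on a neighbourhood of $-\infty$ and $\var'$ is genuinely well defined there.

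Next I would differentiate $\var=t/(1-E)$ by the quotient rule. Using $\frac{d}{dt}E=-k\cosh t\,E$, the numerator becomes $(1-E)-t\,k\cosh t\,E$, which after the substitution $s=-t$ reads $1-E+s\,k\cosh s\,E$, while the denominator is $(1-E)^2$. The next step is to identify the dominant balance as $s\lo+\infty$: since $s\,k\cosh s\lo\infty$, the numerator is asymptotic to $s\,k\cosh s\,E$ and the denominator is asymptotic to $E^2$, whence
\begin{equation}
\var'(t)\sim\frac{s\,k\cosh s}{E}=\frac{s\,k\cosh s}{\exp(k\sinh s)},\qquad s=|t|\lo\infty.
\end{equation}

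To make the double-exponential rate explicit I would insert the elementary asymptotics $\cosh s\sim\tfrac12 e^{s}$ and $\sinh s\sim\tfrac12 e^{s}$, giving
\begin{equation}
\var'(t)\sim\frac{k\,s\,e^{s}}{2\,\exp\!\left(\tfrac{k}{2}e^{s}\right)}.
\end{equation}
The denominator grows double exponentially in $s$, whereas the numerator grows only like $s\,e^{s}$; the algebraic-times-single-exponential factor is therefore negligible and can be absorbed, so that for any fixed $c<k/2$ one has $|\var'(t)|\leq\exp(-c\,e^{|t|})$ for $|t|$ large. This is precisely the double-exponential decay \eqref{eq:doubleexponentialdecay} with $s=|t|$, and in particular $\var'(t)\lo0$.

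The main obstacle I expect is not the limit itself --- which is immediate once the dominant balance is found --- but upgrading the asymptotic relations $\sim$ to a rigorous one-sided bound of the prescribed form. Concretely, one must control the lower-order terms $1-E$ in the numerator and the factor $(1-E^{-1})^2$ in the denominator uniformly for $s$ large, and then verify that the polynomial factor $s$ together with the single exponential $e^{s}$ is dominated by any fixed fraction of the double exponential $\exp(\tfrac{k}{2}e^{s})$. This last comparison of a single exponential against a double exponential is exactly the phenomenon already handled in Lemma \ref{lemma2}; I would therefore invoke that lemma (with a suitable choice of $a$) to absorb the $s\,e^{s}$ factor and close the estimate.
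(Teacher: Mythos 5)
Your proposal is correct, and its mathematical core is the same comparison the paper makes: after the quotient rule, the denominator $(1-E)^2$ (with $E=e^{-k\sinh t}$) behaves like $E^2$, so $\varphi'(t)$ is of the size $k|t|\cosh t/E$, a polynomial-times-single-exponential over a double exponential, giving $|\varphi'(t)|\leq\exp(-c\,e^{|t|})$ for any fixed $c<k/2$ (the paper fixes $c=k/4$ in its Claim, noting exactly the same threshold $k/2$). Where you differ is in organization. The paper splits $\varphi'$ additively into $\frac{1}{1-E}$ and $\frac{kt\cosh t\,E}{(1-E)^2}$ and shows each piece decays double exponentially (Lemma \ref{lemma4.2}), using Lemma \ref{lemma4.3} (eventually $|1-E|>\frac12 E$) to control the denominator, and then a chain of explicit limit computations, ending with \eqref{eq:4.38} and a further lemma asserting $\lim_{t\to-\infty}\bigl(\pm t-\frac{k}{4}e^{-t}\bigr)=-\infty$, to kill the single-exponential factor. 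You instead keep the quotient whole, perform one dominant balance (the $1-E$ term of the numerator being negligible against $sk\cosh s\,E$ after the substitution $s=-t$), and close by invoking Lemma \ref{lemma2} to absorb the factor $s\,e^{s}$ into $e^{\delta e^{s}}$ for $\delta=\frac k2-c$. Your route buys brevity and uniformity: it avoids a separate argument for the $\frac{1}{1-E}$ piece (which the paper must handle at the end of its proof) and replaces the ad hoc limit lemmas of Section 5 by the general inequality already established in Section 2, applied twice (once to get $s\le e^{s}$, once to get $2s\le\delta e^{s}$). The cost, which you candidly flag, is that the $\sim$ relations must still be upgraded to genuine two-sided bounds for large $s$; doing so amounts to proving precisely the estimate of Lemma \ref{lemma4.3}, so a fully written-out version of your argument would reconstruct that lemma rather than bypass it.
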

\begin{proof}
\begin{equation}
\var'(t)=\frac{(1-e^{-ksin ht})-k cos hte^{-ksin ht} t}{(1-e^{-k sin 
ht})^2}, 
\end{equation}
if $ t\lo -\infty$
 then
\begin{equation}   
sinht=\frac{e^{+t}-e^{-t}}{2}\lo -\infty,
 \end{equation}
 and observe that
 \begin{equation}
\var'(t)=\frac{1}{1-e^{-k sin ht}}-\frac{kt cos hte^{-ksin ht}}{(1-e^{-ksin 
ht})^2},\label{eq:phiderivative}
\end{equation}
if  $t\lo -\infty $ then
\begin{equation}
 \frac{1}{1-e^{-ksin ht}}\lo 0,\label{eq:4.17} 
 \end{equation}
 since by \ref{eq:phiderivative} have
\begin{equation} 
= -\frac{ktcos hte^{-ksin ht}}{(1-e^{-ksin ht})^2}. \label{eq:4.18}
\end{equation}
However, if  the  second fraction of \ref{eq:phiderivative} is zero. Condition \ref{eq:phiderivativelimit} is 
satisfied.
\begin{lemma}\label{lemma4.2}
 If  $t\lo -\infty$ term 
$\frac{kt cos hte^{-ksin ht}}{(1-e^{-ksin ht})^2}$ 
 double exponentially approach to zero.
 \end{lemma}
\begin{proof}

 Consider
 \begin{equation} 
 A=\frac{kt cos ht}{1-e^{-k sin ht}}\times \frac{e^{-k sin 
ht}}{1-e^{-k sin ht}}, 
\end{equation}
\begin{equation}
|A|=|A_1|\times |A_2|, 
\end{equation}
such that
\begin{equation}
A_1=\frac{kt cos ht}{1-e^{-k sin ht}}, 
\end{equation}
\begin{equation}
A_2=\frac{e^{-k sin ht}}{1-e^{-k sin ht}}, 
\end{equation}
and
\begin{equation}
\lim_{t\lo -\infty} A_2=\lim_{x\lo 
\infty}\frac{x}{1-x}=\frac{\infty}{-\infty}=-1, 
\end{equation}
so that
\begin{equation}
\lim_{t\lo\infty}|A_2|=1. 
\end{equation}
For $t_0$ large enough, $|A_2|<2$,\\
numerator $A_1$ term $kt cosht$ exponentially approach to $\infty$ but 
denominator double
exponentially approach to $\infty$ then antecedent disable for neutralize 
denominator therefore term obtain  such that double exponentially approach to 
zero with $c$ less than $k$.\end{proof}
\paragraph{Claim:}
 For $c=\frac{k}{4}$
 we must prove 
 \begin{equation}
|A_1|<De^{Ce^{|t|}}. 
\end{equation}
$sinh t$ has constant  2 in its  denominator hence everything less 
than the value $\frac{k}{2}$
will work well but for the value 
 $k$  itself, the term equal to $\infty$.
\begin{lemma}There
 is $t_0$ such that for 
$t>t_0, |1-e^{-k sin ht}|>\frac{1}{2}e^{-k sin ht}.$\label{lemma4.3}
\end{lemma}
\begin{proof}
\begin{equation}
\lim_{t\lo-\infty}\frac{|1-e^{-k sin ht}|}{e^{-k sin ht}}=\lim_{x\lo\infty}
\frac{|1-x|}{x}=1, 
\end{equation}
since $1>\frac{1}{2}$
 therefore for $t>t_0$
 we have:
 \begin{equation}
\frac{|1-e^{-ksin ht|}}{e^{-ksin ht}}>\frac{1}{2},
\end{equation}
and
\begin{equation}
|A_1|<\frac{kt cos ht}{\frac{1}{2}e^{-ksin ht}}.\label{eq:4.28}
\end{equation}
It is trivial instead denominator $A_1$ in \ref{lemma4.3} from lemma \ref{lemma4.2}
We must prove 
\begin{equation}
|t|>t_0, |A_1|<De^{-\frac{k}{4}e^{|t|}},\label{eq:4.29}
\end{equation}
 for this aim we must prove
 \begin{equation}
|2kt cos hte^{ksin ht} e^{\frac{k}{4}e^{|t|}}|<D, 
\end{equation}
as a matter of fact we change denominator $A_1$
 \ref{eq:4.28} to numerator then implies
 \ref{eq:4.29} we get:
 \begin{equation}
t\lo-\infty, |t|=-t, 
\text{left side}=|2kt cosh te^{\frac{k}{2}}e^t e^{\frac{-k}{2}}e^{-t}
e^{\frac{k}{4}}e^{|t|}|<D, \label{eq:4.31}
\end{equation}
this yields
\begin{equation}
=|2kt cos ht e^{\frac{k}{2}e^t}e^{\frac{k}{4}e^{-t}}|<D,\label{eq:4.32} 
\end{equation}
in view of \ref{eq:4.32} we infer that
\begin{equation}
=|2kt cos ht e^{\frac{k}{2}e^t}e^{\frac{-k}{4}e^{-t}}|<D, 
\end{equation}
such that
\begin{equation}
=\lim_{t\lo-\infty}2kt cos hte^{\frac{k}{2}e^t}e^{\frac{-k}{4}e^{-t}}, 
\end{equation}
indeed,we have
\begin{equation}
\lim_{t\lo-\infty} e^{\frac{k}{2}e^t}=1, 
\end{equation}
and notice that we can write the equation
\begin{equation}
=2k\lim_{t\lo-\infty}cos ht e^{\frac{-k}{4}e^{-t}},\label{eq:4.36} 
\end{equation}
instead $cosht =\frac{e^t+e^{-t}}{2}$
 in \ref{eq:4.36}:
 \begin{equation}
=2k\lim_{t\lo -\infty}\frac{1}{2}(e^te^{\frac{-k}{4}e^{-t}}+ 
e^{-t\frac{k}{4}e^{-t}}), 
\end{equation}
we infer that
\begin{equation}
=k\lim_{t\lo-\infty}(e^{t\frac{k}{4}e^{-t}}+e^{-t\frac{k}{4}e^{-t}})=0.\label{eq:4.38}
\end{equation}
\end{proof}
\begin{lemma}
\begin{equation}
\lim_{t\lo-\infty}t-\frac{k}{4}e^{-t}=-\infty, 
\end{equation}
\begin{equation}
\lim_{t\lo-\infty}-t-\frac{k}{4}e^{-t}=-\infty.\label{lemma4.4}
\end{equation}
\end{lemma} 
\begin{proof}
\begin{equation}
\lim_{t\lo-\infty}t\times \frac{t-\frac{k}{4}e^{-t}}{t}=-\infty, 
\end{equation}
 for using L'H\^opital's rule we multiply $t$
 in denominator and numerator in fact we 
create $\frac{\infty}{\infty}$. 
\begin{equation}
\lim_{t\lo-\infty}t=-\infty,
\end{equation}
and observe that
\begin{equation}
\lim_{t\lo-\infty}\frac{t-\frac{k}{4}e^{-t}}{t}=\lim_{t\lo-\infty}
\frac{1+\frac{k}{4}e^{-t}}{1}=\infty, 
\end{equation}
we obtain
\begin{equation}
\lim_{t\lo -\infty}t\times \frac{-t-\frac{k}{4}e^{-t}}{t}=-\infty. 
\end{equation}

Hence \ref{eq:4.38} approach to zero therefore \ref{eq:4.31}, \ref{eq:4.32} for $t>t_0$ 
 is true.\\
We proved  \ref{eq:4.18} approach to zero then we will prove \ref{eq:4.17} approach to 
zero.\\
According the \ref{lemma4.2}  we have:\\
\begin{equation}
|\frac{1}{1-e^{-ksin ht}}|<\frac{1}{\frac{1}{2}e^{-ksin ht}}, 
\end{equation}
and
\begin{equation}
=\frac{2}{e^{-k(\frac{e^t-e^{-t}}{2})}}, 
\end{equation}
so that
\begin{equation}
=\frac{2}{e^{-k\frac{e^t}{2}}e^{k\frac{e^{-t}}{2}}}=
\frac{2e^{k\frac{e^t}{2}}}{e^{k\frac{e^{-t}}{2}}}, 
\end{equation}
this , however,leads to
\begin{equation}
\lim_{t\lo-\infty}2e^{k\frac{e^t}{2}}=1\Lo |t|>t_0, 2e^{k\frac{e^t}{2}}<1, 
\end{equation}

since $t<0$, 
absolute $t$ equal $-t$  gives :
\begin{equation}
|\frac{1}{1-e^{-ksin ht}}|<\frac{2}{e^{k\frac{e^{-t}}{2}}}=
2e^{-\frac{k}{2}e^{-t}}<De^{-Ce^{|t|}}. 
\end{equation}
And the proof is finished, hence the statement is satisfied.\end{proof}
\end{proof}

\paragraph{Conclusion.} In this paper the concept of the double exponential transformation was used to establish this method in spite of usefulness and being attractive it is not generally, we cannot use this method in all conditions especially in sharp point case and convergence rate slowly. The result in the oscillatory case is shown much better in $I_4$ than in the other cases $I_1, I_2, I_3$ (Table 1).  
\paragraph{Appendix  A:GALERKIN METHOD}
The Galerkin method, which approximates a solution of an equation of the form  
\begin{equation}
(1-\la k)f=g. \label{eq:4.50}
\end{equation}
defined on a Banach space $X$, is simple to describe. We start with a set of 
basis functions $\{\psi_k\}_{k=1}^n$, 
 and we assume that for each 
$f\in X$, 
we can determine a unique set of numbers $c_1,\dots ,c_n$ in $\BC$, 
such that the projection $P_{n:X\lo X}$ is 
defined, with \\
\begin{equation}
P_nf=\sum_{k=1}^n c_k\psi_k. 
\end{equation}
It means $p_n^2=p_n$. The Galerkin method then enables us to 
replace equation \ref{eq:4.50}
by the approximate one, namely \ref{eq:4.52},\\
\begin{equation}
(1-\la p_nk)f_n=p_ng,\label{eq:4.52} 
\end{equation}
why did we instead? Because system  \ref{eq:4.50} is infinite system but new system is 
system of n equations in n indeterminate.
\begin{equation}
(1-\la p_nk)\sum_{k=1}^nc_k\psi_k=p_ng, 
\end{equation}
and observe that
\begin{equation}
p_ng=\sum_{k=1}^n d_k\psi_k,
\end{equation} 
such that
\begin{equation}
(1-\la p_nk)\sum_{k=1}^n c_k\psi_k=\sum_{k=1}^nd_k\psi_k,
\end{equation} 
this, however, leads to
\begin{equation}
\sum_{k=1}^n c_k \psi_k-\la \sum_{k=1}^np_nk\psi_k=\sum_{k=1}^n 
d_k\psi_k,
\end{equation}
we write $p_n k\psi_k$ in terms of $\psi_j$,
\begin{equation}
p_nk\psi_k=\sum_{k=1}^n c_{kj}\psi_j,
\end{equation}
we infer that
\begin{equation}
\sum_{k=1}^nc_k\psi_k-\la \sum_{k=1}^n \sum_{j=1}^nc_{kj}\psi_j=\sum_{k=1}^n 
d_k\psi_k,
\end{equation}
we will be equal the coefficients $\psi_i$ on both sides,
\begin{equation}
c_i-\la \sum_{k=1}^n c_{ki}=d_i,\qquad i=1,\dots, n
\end{equation}
this process produces n equations, $n$ indeterminate.\\
In practice, we try to select projection as  $p_n$, to be comfortable working 
with them and $\|k-p_nk\|$
 being so small [7].
 

\end{document}